\theoremstyle{plain}
\newtheorem{thm}{Theorem}[section]
\theoremstyle{definition}
\theoremstyle{remark}
\newtheorem{remark}{Remark}[section]
\newtheorem{Cor}{Corollary}[section]
\theoremstyle{remark}
\numberwithin{equation}{section}
\begin{document}

\title[Stability of the entropy equation]
{Stability of the  entropy equation }
\author{Eszter Gselmann}
\address{Institute of Mathematics\\
University of Debrecen\\
H--4010 Debrecen, P.~O.~Box 12.}
\email{gselmann@math.klte.hu}

\keywords{Stability, entropy equation, cocycle equation, amenable semigroup}

\subjclass{39B82, 94A17}

\submitted{\today}

\thanks{This research has been supported by the Hungarian Scientific Research Fund
(OTKA) Grant NK 68040 and also by the Universitas Scholarship founded by the K\&H Bank
(Kereskedelmi \'{e}s Hitelbank Rt.).}

\begin{abstract}
In this paper we prove that the so--called entropy equation, i.e.,
\[
H\left(x, y, z\right)=H\left(x+y, 0, z\right)+H\left(x, y, 0\right)
\]
is stable in the sense of Hyers and Ulam on the
positive cone of $\mathbb{R}^{3}$, assuming that the function $H$
is approximatively symmetric in each variable and
approximatively homogeneous of degree $\alpha$, where
$\alpha$ is an arbitrarily fixed real number.
\end{abstract}

\maketitle

\section{Introduction and preliminaries}

The stability theory of functional equation originates from a famous question of
S.~M. Ulam concerning the additive Cauchy equation. He asked whether is it true that
the solution of the additive Cauchy equation differing slightly
from a given one, must of necessity be close to the solution of this
equation. In 1941 D.~H.~Hyers gave an affirmative answer to the previous
question.
Nowadays this result (see \cite{Hye41}) is referred to as
\emph{the stability of the Cauchy equation}.
Since then the stability theory of functional equations
has become a developing field of research, see e.g.,
\cite{For95}, \cite{Ger94}, \cite{Mos04}
and their references.

In the theory of stability there exist several methods.
From our point of view however the method of invariant means plays a key role.
Concerning this topic we offer the expository paper Day \cite{Day57}.
Although the only result needed from \cite{Day57} is,
that on every commutative semigroup there
exist an invariant mean, that is,
every commutative semigroup is \emph{amenable}.

In what follows, denote $\mathbb{R}$ the set of the real numbers,
furthermore, on the symbols $\mathbb{R}_{+}$ and $\mathbb{R}_{++}$ we
understand the set of the nonnegative and the positive
real numbers, respectively.

The aim of this paper is to prove that \emph{the entropy equation}, i.e., equation
\begin{equation}\label{ent.eq}
H\left(x, y, z\right)=H\left(x+y, 0, z\right)+H\left(x, y, 0\right)
\end{equation}
is stable on $\mathbb{R}^{3}_{++}$.

In \cite{KM74} A.~Kami\'{n}ski and J.~Mikusi\'{n}ski
determined the continuous and $1$--homogeneous solutions of
equation (\ref{ent.eq}).
This result was strengthened by
J.~Acz\'{e}l in \cite{Acz77}, by proving the following.

\begin{thm}
Let
\[
D=\left\{(x, y, z)\in\mathbb{R}^{3}\vert x\geq 0, y\geq 0, z\geq 0, x+y+z>0\right\}.
\]
Assume that the function $H:D\rightarrow\mathbb{R}$ is symmetric, positively homogeneous of
degree $1$, satisfies the functional equation
\[
H\left(x, y, z\right)=H\left(x+y, z, 0\right)+H\left(x, y, 0\right)
\]
on $D$ and the map $x\longmapsto H\left(1-x, x, 0\right)$ is either continuous at a point
or bounded on an interval or integrable on
the closed subintervals of $]0,1[$ or measurable on $]0,1[$. Then, and only then
\[
H\left(x, y, z\right)=
x\log\left(x\right)+y\log\left(y\right)+z\log\left(z\right)-
\left(x+y+z\right)\log\left(x+y+z\right)
\]
holds for all $\left(x, y, z\right)\in D$, with arbitrary
basis for the logarithm and with the convention $0\cdot \log\left(0\right)=0$.
\end{thm}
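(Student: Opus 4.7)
By positive homogeneity of degree one, the behavior of $H$ is determined by its restriction to the simplex $\{(x,y,z)\in D\mid x+y+z=1\}$. I would introduce the one-variable function
\[
f(p):=H(1-p,p,0), \qquad p\in[0,1],
\]
and first verify the basic identities $H(x,0,0)=0$ for all $x>0$ (by setting $y=z=0$ in the functional equation) and hence $f(0)=H(1,0,0)=0$. By homogeneity,
\[
H(a,b,0)=(a+b)\,f\!\left(\tfrac{b}{a+b}\right)
\]
for all $a,b\geq 0$ with $a+b>0$, and the symmetry of $H$ in $a,b$ forces $f(p)=f(1-p)$.

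Substituting into the functional equation on the simplex gives, for $y+z<1$,
\[
H(x,y,z)=f(z)+(1-z)\,f\!\left(\tfrac{y}{1-z}\right),
\]
since $H(x+y,z,0)=f(z)$ when $x+y+z=1$. Applying the symmetry of $H$ once more (swapping $y$ and $z$ in the above representation) yields the \emph{fundamental equation of information}
\[
f(y)+(1-y)\,f\!\left(\tfrac{z}{1-y}\right)=f(z)+(1-z)\,f\!\left(\tfrac{y}{1-z}\right),
\]
valid for all $y,z\in[0,1)$ with $y+z\leq 1$.

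The analytic core of the proof is to solve this equation under the stated regularity. The classical programme (Daróczy, Acz\'el) is to substitute suitable parametrisations that convert the equation into a Cauchy functional equation on a subinterval of $\R$; under each of the four hypotheses---continuity at a point, boundedness on an interval, integrability on compact subintervals of $]0,1[$, or measurability on $]0,1[$---the resulting additive function is forced to be linear via the standard regularity theorems (Steinhaus, Sierpi\'nski, and Fr\'echet). Combined with the symmetry condition $f(p)=f(1-p)$ and the normalization $f(0)=0$, this pins down $f$ uniquely as
\[
f(p)=-p\log p-(1-p)\log(1-p)
\]
(in an arbitrary logarithm base, which absorbs the overall multiplicative constant). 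I expect this regularity step---particularly promoting mere measurability of $f$ all the way to the explicit closed form---to be the main obstacle.

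Finally, knowing $f$ on $[0,1]$ determines $H$ on the simplex through the representation above, and positive homogeneity of degree one extends it to all of $D$. A direct algebraic simplification then produces
\[
H(x,y,z)=x\log x+y\log y+z\log z-(x+y+z)\log(x+y+z),
\]
under the convention $0\log 0=0$. The converse direction---that this explicit $H$ is symmetric, $1$-homogeneous, satisfies the equation, and is continuous hence integrable and measurable---reduces to a routine verification.
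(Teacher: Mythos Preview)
The paper does not prove this statement: it is Theorem~1.1 in the preliminaries, quoted from Acz\'el~\cite{Acz77} as background for the main stability result, with no proof given. So there is no ``paper's own proof'' to compare your proposal against.

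That said, your outline is exactly the classical route associated with Acz\'el and Dar\'oczy: reduce via $1$-homogeneity to the simplex, encode $H(\cdot,\cdot,0)$ by the single-variable function $f(p)=H(1-p,p,0)$, derive from the entropy equation and symmetry the fundamental equation of information
\[
f(y)+(1-y)\,f\!\left(\frac{z}{1-y}\right)=f(z)+(1-z)\,f\!\left(\frac{y}{1-z}\right),
\]
and then invoke the known characterisation of its regular solutions as the Shannon function. Your derivation of $H(x,0,0)=0$ and of $f(p)=f(1-p)$ is correct, and the final reconstruction of $H$ on $D$ from $f$ via homogeneity is straightforward. The only place where more detail would be needed in a self-contained write-up is the ``analytic core'' you flag yourself: showing that each of the four regularity hypotheses on $f$ forces the Shannon form. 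This is nontrivial but well documented in the information-theoretic functional equations literature; since the present paper simply cites Acz\'el's theorem, it does not supply that argument either.
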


Using a result of Jessen--Karpf--Thorup \cite{JKT69}, which concerns the solution of
the cocycle equation, Z.~Dar\'{o}czy proved the following (see \cite{Dar76}).

\begin{thm}
If a function $H:D\rightarrow\mathbb{R}$ is symmetric in $D$ and
satisfies the equation (\ref{ent.eq}) in the interior of $D$ and
the map $(x, y)\mapsto H\left(x, y, 0\right)$ is positively
homogeneous (of order $1$) for all $x, y\in\mathbb{R}_{++}$,
then there exists a function
$\varphi:\mathbb{R}_{++}\rightarrow\mathbb{R}$ such that
\[
\varphi\left(xy\right)=x\varphi\left(y\right)+y\varphi\left(x\right)
\]
holds for all $x, y \in\mathbb{R}_{++}$ and
\[
H\left(x, y, z\right)=\varphi\left(x+y+z\right)-\varphi(x)-\varphi(y)-\varphi(z)
\]
for all $\left(x, y, z\right)\in D$.
\end{thm}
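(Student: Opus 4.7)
The plan is to reduce (\ref{ent.eq}) to a two-variable cocycle equation for the auxiliary function $F(x,y):=H(x,y,0)$, apply the Jessen--Karpf--Thorup theorem cited just above to represent $F$ as a coboundary, and then extract the logarithmic functional equation for the coboundary potential from the one-homogeneity hypothesis.

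First, the symmetry of $H$ gives $H(x+y,0,z)=H(x+y,z,0)=F(x+y,z)$, so (\ref{ent.eq}) collapses to $H(x,y,z)=F(x+y,z)+F(x,y)$ on the interior of $D$. Writing the same identity after the cyclic permutation $(x,y,z)\mapsto(y,z,x)$ and using that $F$ inherits symmetry from $H$ produces the $2$-cocycle equation
\[
F(x+y,z)+F(x,y)=F(x,y+z)+F(y,z)
\]
on the abelian semigroup $(\mathbb{R}_{++},+)$. By the Jessen--Karpf--Thorup theorem this symmetric cocycle is a coboundary, so there is $\varphi\colon\mathbb{R}_{++}\to\mathbb{R}$ with $F(x,y)=\varphi(x+y)-\varphi(x)-\varphi(y)$. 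Substituting back and telescoping yields $H(x,y,z)=\varphi(x+y+z)-\varphi(x)-\varphi(y)-\varphi(z)$, which is the second half of the conclusion.

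It remains to produce the Leibniz rule $\varphi(xy)=x\varphi(y)+y\varphi(x)$. Plugging the coboundary form into the hypothesis $F(tx,ty)=tF(x,y)$ and rearranging gives
\[
\bigl[\varphi(tx+ty)-t\varphi(x+y)\bigr]=\bigl[\varphi(tx)-t\varphi(x)\bigr]+\bigl[\varphi(ty)-t\varphi(y)\bigr],
\]
so that for each fixed $t>0$ the map $x\mapsto\varphi(tx)-t\varphi(x)$ is additive on $\mathbb{R}_{++}$. The main obstacle is to upgrade this additivity to the identification $\varphi(tx)-t\varphi(x)=x\varphi(t)$, which is precisely the desired Leibniz rule. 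Because $\varphi$ is determined by $F$ only up to an additive summand, I would normalise $\varphi(1)=0$ and exploit this remaining freedom, together with the symmetry of the logarithmic defect $\varphi(xy)-x\varphi(y)-y\varphi(x)$ in $(x,y)$, to select the representative of $\varphi$ for which the defect vanishes identically; this is the only delicate point in the argument and is where the full strength of Jessen--Karpf--Thorup combines with the homogeneity.
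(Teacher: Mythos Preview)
Your reduction to the cocycle equation for $F(x,y)=H(x,y,0)$ and the telescoping that yields $H(x,y,z)=\varphi(x+y+z)-\varphi(x)-\varphi(y)-\varphi(z)$ are correct and coincide with the route the paper takes (specialising the proof of Theorem~\ref{main} to $\varepsilon_1=\varepsilon_2=\varepsilon_3=0$, as in the Corollary).

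The gap is in the last paragraph. Theorem~5 of \cite{JKT69}, which is what both Dar\'oczy and the present paper invoke, is not merely the assertion that a symmetric cocycle is a coboundary: its content is precisely that a symmetric, \emph{positively $1$-homogeneous} cocycle on $\mathbb{R}_{++}$ admits a potential $\varphi$ that \emph{already} satisfies $\varphi(xy)=x\varphi(y)+y\varphi(x)$. The paper quotes this in full strength and is done. You instead use only the coboundary half and then try to manufacture the Leibniz rule yourself, which amounts to reproving the theorem. Your computation correctly shows that, after normalising $\varphi(1)=0$, the defect $D(t,x):=\varphi(tx)-t\varphi(x)-x\varphi(t)$ is symmetric and biadditive with $D(1,\cdot)=0$; but that alone does not force $D\equiv 0$, since Hamel-basis constructions produce nonzero symmetric biadditive maps vanishing on $\{1\}\times\mathbb{R}_{++}$. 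The only freedom left is to replace $\varphi$ by $\varphi+a$ with $a$ additive, which shifts $D$ by $a(tx)-ta(x)-xa(t)$, and you have not shown that every admissible $D$ is absorbed in this way. Either cite \cite{JKT69} at full strength, as the paper does, or supply that missing argument explicitly.
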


During the proof of the main result the stability of the \emph{cocycle equation} is needed.
This theorem can be found in \cite{Sze95}.

\begin{thm}\label{szekely}
Let $S$ be a right amenable semigroup and let
$F:S\times S\rightarrow\mathbb{C}$ be a function, for which the function
\begin{equation}\label{Eqc}
\left(x, y, z\right)\longmapsto
F\left(x, y\right)+F\left(x+y, z\right)-F\left(x, y+z\right)-F\left(y, z\right)
\end{equation}
is bounded on $S\times S\times S$.
Then there exists a function
$\Psi:S\times S\rightarrow\mathbb{C}$ satisfying the cocycle equation,
i.e.,
\begin{equation}\label{cocycle}
\Psi\left(x, y\right)+\Psi\left(x+y, z\right)=\Psi\left(x, y+z\right)+\Psi\left(y, z\right)
\end{equation}
for all $x, y, z \in S$ and for which the function $F-\Psi$ is bounded by the same constant as
the map defined by (\ref{Eqc}).
\end{thm}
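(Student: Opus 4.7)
The plan is to apply the method of invariant means to the averaging identity naturally associated with the cocycle equation. Let $\varepsilon \geq 0$ denote the uniform bound on the expression in (\ref{Eqc}), that is, on the Cauchy difference
\[
C_F(x, y, z) := F(x, y) + F(x+y, z) - F(x, y+z) - F(y, z).
\]
The starting observation is the trivial rearrangement
\[
F(u, a) + F(u+a, b) - F(u, a+b) = F(a, b) + C_F(u, a, b),
\]
which shows that, for every fixed $(a, b) \in S \times S$, the left-hand side differs from the constant $F(a,b)$ by at most $\varepsilon$; in particular it is a bounded function of the variable $u$.

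Using the right amenability of $S$, I would fix a right-invariant mean $M$ on the space of bounded complex-valued functions on $S$ and define
\[
\Psi(a, b) := M_u\bigl[F(u, a) + F(u+a, b) - F(u, a+b)\bigr].
\]
Linearity of $M$ together with $M[1]=1$ then yields $\Psi(a,b) - F(a,b) = M_u\bigl[C_F(u,a,b)\bigr]$, so that $|F(a,b) - \Psi(a,b)| \leq \varepsilon$ by monotonicity of the mean. This already produces the sharp bound on $F - \Psi$ with the same constant as in the hypothesis.

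It remains to verify the cocycle equation for $\Psi$. The strategy is to expand each of the four $\Psi$-terms in $\Psi(x, y) + \Psi(x+y, z) - \Psi(x, y+z) - \Psi(y, z)$ according to the defining formula, collect like terms, and apply the rearrangement identity above twice (at $(u+x, y, z)$ and at $(u, y, z)$). After cancellation, the entire expression collapses to
\[
M_u\bigl[C_F(u+x, y, z) - C_F(u, y, z)\bigr],
\]
which vanishes by the right-translation invariance of $M$ applied to the bounded function $\phi(u) := C_F(u, y, z)$. The main obstacle I anticipate is a bookkeeping one: the argument hinges on choosing the defining formula for $\Psi$ so that the only shift surviving after cancellation is of the form $u \mapsto u + x$, which is precisely the translation that right-invariance of $M$ absorbs. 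Averaging on a different slot of $F$ would generate shifts in the ``wrong'' direction that right-invariance alone could not annihilate, so selecting the right averaging scheme is the only genuinely creative step; once that is done, the remainder of the proof is a short linear computation.
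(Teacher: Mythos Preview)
The paper does not supply a proof of this theorem at all: it is quoted verbatim as a known result from Sz\'ekelyhidi \cite{Sze95} and used later as a black box in the proof of Theorem~\ref{main}. So there is no ``paper's own proof'' to benchmark your attempt against.

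That said, your argument is correct and is exactly the classical invariant--mean proof one expects here (and, in outline, the one in \cite{Sze95}). The key choices---averaging in the first variable so that the only translation surviving after cancellation is $u\mapsto u+x$, and then killing $M_u\bigl[C_F(u+x,y,z)-C_F(u,y,z)\bigr]$ by right invariance---are the standard ones. The bound $|F-\Psi|\le\varepsilon$ with the \emph{same} constant drops out immediately from $\Psi(a,b)-F(a,b)=M_u[C_F(u,a,b)]$ and $|M(\phi)|\le\|\phi\|_\infty$, as you note. The cancellation you describe in step~5 checks out line by line: after expanding the four $\Psi$-terms and cancelling $F(u,x)$, $F(u,x+y)$, $F(u,x+y+z)$ pairwise, what remains is precisely $C_F(u+x,y,z)-C_F(u,y,z)$. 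There is no gap.
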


About the symmetric, $1$--homogeneous solutions of the cocycle equation  one
can read in \cite{JKT69}.
Furthermore, the symmetric and $\alpha$--homogeneous solutions of equation
(\ref{cocycle}) can be found in \cite{Mak82}, as a consequence of Theorem 3.
The general solution of the cocycle equation without symmetry and homogeneity assumptions,
on cancellative abelian semigroups
was determined by M.~Hossz\'{u} in \cite{Hos71}.

\section{The main result}

Our main result is the following.
\begin{thm}\label{main}
Let $\varepsilon_{1}, \varepsilon_{2}, \varepsilon_{3}$ be arbitrary
nonnegative real numbers, $\alpha\in\mathbb{R}$, and assume that the function
$H:D\rightarrow\mathbb{R}$ satisfies the following system of inequalities.
\begin{equation}\label{Eq1}
\left|H(x, y, z)-H\left(\sigma(x), \sigma(y), \sigma(z)\right)\right|\leq \varepsilon_{1}
\end{equation}
for all $(x, y, z)\in D$ and for all
$\sigma:\left\{x, y, z\right\}\mapsto\left\{x, y, z\right\}$
permutation;
\begin{equation}\label{Eq2}
\left|H\left(x, y, z\right)-H\left(x+y, 0, z\right)-H\left(x, y, 0\right)\right|\leq \varepsilon_{2}
\end{equation}
for all $(x, y, z)\in D^{\circ}$, where $D^{\circ}$ denotes the interior of the set $D$;
\begin{equation}\label{Eq3}
\left|H\left(tx, ty, 0\right)-t^{\alpha}H(x, y, 0)\right|\leq \varepsilon_{3}
\end{equation}
holds for all $t, x, y\in\mathbb{R}_{++}$.
Then, in case $\alpha=1$ there exists a function $\varphi:\mathbb{R}_{++}\rightarrow\mathbb{R}$ which satisfies
the functional equation
\[
\varphi\left(xy\right)=x\varphi\left(y\right)+y\varphi\left(x\right),
\quad \left(x, y\in\mathbb{R}_{++}\right)
\]
and
\begin{equation}\label{Eq4}
\left|H\left(x, y, z\right)-
\left[\varphi\left(x+y+z\right)-\varphi\left(x\right)-\varphi\left(y\right)-\varphi\left(z\right)\right]\right|
\leq \varepsilon_{1}+\varepsilon_{2}
\end{equation}
holds for all $\left(x, y, z\right)\in D^{\circ}$;
in case $\alpha=0$ there exists a constant $a\in\mathbb{R}$ such that
\begin{equation}\label{Eq5}
\left|H\left(x, y, z\right)-a\right|\leq
8\varepsilon_{3}+25\varepsilon_{2}+49\varepsilon_{1}
\end{equation}
for all $\left(x, y, z\right)\in D^{\circ}$;
finally, in all other cases there exists a constant $c\in\mathbb{R}$ such that
\begin{equation}\label{Eq6}
\left|H\left(x, y, z\right)-c\left[\left(x+y+z\right)^{\alpha}-x^{\alpha}-y^{\alpha}-z^{\alpha}\right]\right|
\leq \varepsilon_{1}+\varepsilon_{2}
\end{equation}
holds on $D^{\circ}$.
\end{thm}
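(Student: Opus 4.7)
My strategy is to reduce the three-variable entropy equation to the two-variable cocycle equation and then apply Theorem~\ref{szekely}. I begin by setting $F(x,y):=H(x,y,0)$ on $\mathbb{R}_{++}\times\mathbb{R}_{++}$. Combining \eqref{Eq2} with \eqref{Eq1} to replace $H(x+y,0,z)$ by $H(x+y,z,0)=F(x+y,z)$ up to an $\varepsilon_1$ error, I obtain the key inequality
\[
|H(x,y,z) - F(x+y,z) - F(x,y)| \leq \varepsilon_1 + \varepsilon_2. \quad (\star)
\]
Applying $(\star)$ to the permuted triple $(y,z,x)$, together with the approximate symmetries $|H(x,y,z)-H(y,z,x)|\leq\varepsilon_1$ and $|F(u,v)-F(v,u)|\leq\varepsilon_1$ coming from \eqref{Eq1}, shows that $F$ approximately satisfies the cocycle equation \eqref{cocycle}, with defect bounded by an explicit constant $K$ linear in $\varepsilon_1,\varepsilon_2$.

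Since $(\mathbb{R}_{++},+)$ is commutative and hence amenable, Theorem~\ref{szekely} produces an exact cocycle $\Psi$ with $|F-\Psi|\leq K$. By Hossz\'u's representation, $\Psi(x,y)=f(x+y)-f(x)-f(y)$ for some $f:\mathbb{R}_{++}\to\mathbb{R}$, so that $\Psi(x+y,z)+\Psi(x,y)=f(x+y+z)-f(x)-f(y)-f(z)$; substituting into $(\star)$ yields an approximation of $H$ by an entropy-like form. To pin down $f$ (hence $\Psi$), I invoke the approximate $\alpha$-homogeneity \eqref{Eq3} of $F$, which transfers to $\Psi$. For $\alpha\ne 0,1$, Mak\'o's classification of symmetric $\alpha$-homogeneous cocycles forces $\Psi(x,y)=c[(x+y)^\alpha-x^\alpha-y^\alpha]$, giving \eqref{Eq6}. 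For $\alpha=1$, the Jessen--Karpf--Thorup result combined with the cocycle structure produces a $\varphi$ satisfying $\varphi(xy)=x\varphi(y)+y\varphi(x)$, yielding \eqref{Eq4}. For $\alpha=0$, approximate $0$-homogeneity together with approximate symmetry forces $\Psi$ to be essentially constant; the larger bound $8\varepsilon_3+25\varepsilon_2+49\varepsilon_1$ in \eqref{Eq5} reflects the cumulative cost of iterating estimates over the multiplicative orbits, which is qualitatively different from the other two cases.

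The main obstacle I anticipate is that a direct application of Theorem~\ref{szekely} inflates the constant well beyond the sharp bound $\varepsilon_1+\varepsilon_2$ claimed in \eqref{Eq4} and \eqref{Eq6}. To achieve the sharp bound, I would bypass Sz\'ekely's theorem in those two cases and construct $\varphi$ (respectively the constant $c$) directly from $H$, most plausibly as an invariant-mean value of an expression like $t^{-\alpha}F(tx,ty)$ over the amenable multiplicative semigroup $(\mathbb{R}_{++},\cdot)$. The approximate homogeneity \eqref{Eq3} then guarantees that this mean is well defined and inherits the exact $\alpha$-homogeneity relation needed for the structural theorems, so that the final error in $H$ comes solely from $(\star)$.
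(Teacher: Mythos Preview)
Your outline coincides with the paper's through the reduction to the cocycle inequality and the appeal to Theorem~\ref{szekely} and Hossz\'u's representation, but you miss the one observation that produces the sharp bound $\varepsilon_{1}+\varepsilon_{2}$ in \eqref{Eq4} and \eqref{Eq6}. The paper notes that for $\alpha\neq 0$ the hypothesis \eqref{Eq3} already forces $F(x,y)=H(x,y,0)$ to be \emph{exactly} $\alpha$-homogeneous: dividing \eqref{Eq3} by $t^{\alpha}$ and sending $t\to t_{0}$ (with $t_{0}=0$ if $\alpha<0$ and $t_{0}=+\infty$ if $\alpha>0$) gives $F(x,y)=\lim_{t\to t_{0}}t^{-\alpha}F(tx,ty)$, whence $F(sx,sy)=s^{\alpha}F(x,y)$ identically. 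Combining this with the bound $|F-G|\le 2\varepsilon_{2}+4\varepsilon_{1}$ and taking the same limit in $t^{-\alpha}G(tx,ty)$ transfers the cocycle identity and the symmetry from $G$ back to $F$, so $F$ itself is an exact symmetric $\alpha$-homogeneous cocycle. The structure theorems of Jessen--Karpf--Thorup and Maksa are then applied to $F$, not to $\Psi$, giving $H(x,y,0)$ equal to the model form with \emph{zero} error; the final estimate then comes solely from your inequality~$(\star)$, and $\varepsilon_{3}$ never enters.

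Your proposed fix via an invariant mean of $t\mapsto t^{-\alpha}F(tx,ty)$ over $(\mathbb{R}_{++},\cdot)$ does not work as written: the available bound $|t^{-\alpha}F(tx,ty)-F(x,y)|\le \varepsilon_{3}t^{-\alpha}$ is not uniform in $t$, so the map need not lie in the space of bounded functions on which the mean acts, and even a restricted version would leave an $\varepsilon_{3}$-term in the final estimate. The elementary limit above is both simpler and sharper. Two smaller points: Hossz\'u's theorem yields $\Psi(x,y)=B(x,y)+f(x+y)-f(x)-f(y)$ with $B$ additive in the first variable and skew-symmetric, and one still has to use the approximate symmetry $|F(x,y)-F(y,x)|\le\varepsilon_{1}$ to see that $B$ is bounded, hence identically zero; and for $\alpha=0$ the paper does not invoke any structural theorem but compares $F(x,y)$ with $F(1,1)$ through $G(2x,2y)$ and the cocycle identity $G(2x,2y)=2G(x,y)-G(x,x)$, which is where the specific constant $8\varepsilon_{3}+25\varepsilon_{2}+49\varepsilon_{1}$ arises.
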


\begin{proof}
Using inequality (\ref{Eq3}) we will show that the map
$(x, y)\mapsto H\left(x, y, 0\right)$ is homogeneous
of degree $\alpha$, in fact, assuming that $\alpha\neq 0$.
Due to (\ref{Eq3})
\[
\left|H(x, y, 0)-\frac{H(tx, ty, 0)}{t^{\alpha}}\right|\leq \frac{\varepsilon_{3}}{t^{\alpha}}
\]
holds for all $t, x, y\in \mathbb{R}_{++}$.
Hence, if we define
\[
t_{0}=\left\{
\begin{array}{lcl}
0 & \text{, if}& \alpha<0 \\
+\infty  & \text{, if} & \alpha>0,
\end{array}
\right.
\]
then for all $x, y\in\mathbb{R}_{++}$
\begin{equation}
\lim_{t\rightarrow t_{0}} t^{-\alpha}H\left(tx, ty, 0\right)= H\left(x, y, 0\right).
\end{equation}
Thus we have for arbitrary $s, x, y\in\mathbb{R}_{++}$
\begin{equation}\label{Eq7}
\begin{array}{lcl}
H(sx, sy, 0)
 &=&\lim_{t\rightarrow t_{0}}t^{-\alpha}H(tsx, tsy, 0) \\
 &=&\lim_{t\rightarrow t_{0}}t^{-\alpha}s^{-\alpha}H((ts)x, (ts)y, 0)s^{\alpha}\\
 &=&s^{\alpha}\lim_{t\rightarrow t_{0}}(ts)^{-\alpha}H((ts)x, (ts)y, 0)\\
 &=&s^{\alpha}H(x, y, 0).
\end{array}
\end{equation}
Therefore, the map $(x, y)\mapsto H\left(x, y, 0\right)$
 is homogeneous of degree $\alpha$, indeed.

In what follows, we will investigate inequalities (\ref{Eq1}) and (\ref{Eq2}).
Interchanging $x$ and $z$ in (\ref{Eq2}), we obtain that
\begin{equation}\label{Eq8}
\left|H\left(z, y, x\right)-H\left(z+y, 0, x\right)-H\left(z, y, 0\right)\right|\leq \varepsilon_{2}
\end{equation}
is satisfied for all $(x, y, z)\in D^{\circ}$.
Inequalities (\ref{Eq1}), (\ref{Eq2}), (\ref{Eq8}) and the triangle inequality imply that
\begin{equation}\label{Eq9}
\left|H\left(x+y, 0, z\right)+H\left(x, y, 0\right)-
H\left(y+z, 0, x\right)-
H\left(z, y, 0\right)\right|\leq 2\varepsilon_{2}+\varepsilon_{1}
\end{equation}
is fulfilled for all $(x, y, z)\in D^{\circ}$.
Applying inequality (\ref{Eq1}) three times, from (\ref{Eq9}), we get that
\begin{equation}\label{Eq10}
\left|H\left(x+y, z, 0\right)+
H\left(x, y, 0\right)-
H\left(x, y+z, 0\right)-
H\left(y, z, 0\right)\right|\leq 2\varepsilon_{2}+4\varepsilon_{1}
\end{equation}
holds on $D^{\circ}$.
Therefore, the function $F$ defined by
\begin{equation}\label{Fdef}
F(x, y)=H(x, y, 0) \quad (x, y\in\mathbb{R}_{++})
\end{equation}
satisfies
\begin{equation}\label{Eq11}
\left|F(x, y)-F(y, x)\right|\leq \varepsilon_{1}, \quad \left(x, y\in \mathbb{R}_{++}\right)
\end{equation}
due to inequality (\ref{Eq1}). Since, for the function $H$
inequality (\ref{Eq10}) holds, we receive that
\begin{equation}\label{Eq12}
\left|F(x+y, z)+
F(x, y)-
F(x, y+z)-
F(y, z)\right|\leq 2\varepsilon_{2}+4\varepsilon_{1}.
\quad (x, y, z\in\mathbb{R}_{++})
\end{equation}
We have just proved that in case $\alpha\neq 0$, $H(x, y, 0)$ is
homogeneous of degree $\alpha$, therefore
\begin{equation}\label{Eq13}
F(tx, ty)=t^{\alpha}F(x, y) \quad \left(\alpha\neq 0, t, x, y \in\mathbb{R}_{++}\right)
\end{equation}
furthermore, because of (\ref{Fdef}) and (\ref{Eq3}), we obtain that
\begin{equation}\label{Eq14}
\left|F(tx, ty)-F(x, y)\right|\leq \varepsilon_{3},  \quad \left(t, x, y\in\mathbb{R}_{++}\right)
\end{equation}
in case $\alpha=0$.

The set $D^{\circ}$ is a commutative semigroup with the usual addition.
Thus it is amenable, as well.
Therefore, by Theorem \ref{szekely}., there exists a function
$G:\mathbb{R}^{2}_{++}\rightarrow\mathbb{R}$ which is a solution of the cocycle
equation, and for which
\begin{equation}\label{EQ20}
\left|F\left(x, y\right)-G\left(x, y\right)\right|
\leq 2\varepsilon_{2}+4\varepsilon_{1}
\end{equation}
holds for all $x, y\in\mathbb{R}_{++}$.
Additionally, by a result of
\cite{Hos71} there exist a function $f:\mathbb{R}_{++}\rightarrow\mathbb{R}$ and a function
$B:\mathbb{R}^{2}_{++}\rightarrow\mathbb{R}$ which satisfies the following system
\[
\begin{array}{rcl}
B(x+y, z)&=&B(x, z)+B(y, z), \\
B(x, y)+B(y, x)&=&0,
\end{array}
\quad (x, y, z\in\mathbb{R}_{++})
\]
such that
\[
G\left(x, y\right)=B\left(x, y\right)+f\left(x+y\right)-f\left(x\right)-f\left(y\right).
\quad \left(x, y\in\mathbb{R}_{++}\right)
\]
All in all, this means that
\begin{equation}\label{Eq15}
\left|F(x, y)-\left(B(x, y)+f(x+y)-f(x)-f(y)\right)\right|\leq 2\varepsilon_{2}+4\varepsilon_{1}
\end{equation}
holds for all $x, y\in\mathbb{R}_{++}$.

Using the above properties of the function $B$, we
will show that $B$ is identically zero on $\mathbb{R}^{2}_{++}$.
Indeed, by reason of the triangle inequality and (\ref{Eq15}),
\[
\begin{array}{l}
\left|2B(x, y)\right|=\left|B(x, y)-B(y, x)\right| \\
\leq
\left|F(x, y)-\left(B(x, y)+f(x+y)-f(x)-f(y)\right)\right|\\
+
\left|F(y, x)-\left(B(y, x)+f(y+x)-f(y)-f(x)\right)\right|\\
+ \left|F(x, y)-F(y, x)\right| \\
\leq (2\varepsilon_{2}+4\varepsilon_{1})+(2\varepsilon_{2}+4\varepsilon_{1})+\varepsilon_{1}=
4\varepsilon_{2}+9\varepsilon_{1}
\end{array}
\]
is fulfilled for all $x, y\in\mathbb{R}_{++}$. Thus $B$ is bounded on the set $\mathbb{R}^{2}_{++}$.
On the other hand, $B$ is biadditive. However, only the identically zero function has these
properties. Therefore, $B\equiv 0$ on $\mathbb{R}_{++}$.

Then we get that the function $K:\mathbb{R}^{2}_{++}\rightarrow\mathbb{R}$ defined by
\[
K(x, y)=F(x, y)-G(x, y) \quad \left(x, y\in\mathbb{R}_{++}\right)
\]
is bounded on $\mathbb{R}^{2}_{++}$ by $2\varepsilon_{2}+4\varepsilon_{1}$,
in view of inequality (\ref{Eq15}).
In case $\alpha\neq 0$, it can be seen that
\[
K\left(tx, ty\right)=F\left(tx, ty\right)-G\left(tx, ty\right),  \quad \left(t, x, y\in\mathbb{R}_{++}\right)
\]
furthermore, making use of (\ref{Eq13})
\[
K\left(tx, ty\right)=t^{\alpha}F\left(x, y\right)-G\left(tx, ty\right)
\]
holds for all $t, x, y\in\mathbb{R}_{++}$.
Rearranging this,
\[
\frac{K\left(tx, ty\right)}{t^{\alpha}}=F(x, y)-\frac{G\left(tx, ty\right)}{t^{\alpha}}
\]
for all $t, x, y\in\mathbb{R}_{++}$.
Since the function $K$ is bounded on $\mathbb{R}^{2}_{++}$, we receive that
\[
F(x, y)=\lim_{t\rightarrow t_{0}}\frac{G\left(tx, ty\right)}{t^{\alpha}}.   \quad \left(x, y\in\mathbb{R}_{++}\right)
\]

Because of the symmetry of the function $G$, the function $F$ is symmetric, as well.
Furthermore, $G$ satisfies the cocycle equation on $\mathbb{R}_{++}$, that is,
\[
G(x+y, z)+G(x, y)=G(x, y+z)+G(y, z),  \quad \left(x, y, z\in\mathbb{R}_{++}\right)
\]
especially,
\[
\frac{G(tx+ty, tz)}{t^{\alpha}}+\frac{G(tx, ty)}{t^{\alpha}}=
\frac{G(tx, ty+tz)}{t^{\alpha}}+\frac{G(ty, tz)}{t^{\alpha}}
\]
is also satisfied for all $t, x, y, z\in\mathbb{R}_{++}$.
Taking the limit $t\rightarrow t_{0}$ we obtain that
\[
F(x+y, z)+F(x, y)=F(x, y+z)+F(y, z). \quad \left(x, y, z\in\mathbb{R}_{++}\right)
\]
This means that also the function $F$ satisfies the cocycle equation on $\mathbb{R}^{2}_{++}$.
Additionally, $F$ is homogeneous of degree $\alpha$ ($\alpha\neq 0$) and symmetric.
Using Theorem 5. in \cite{JKT69}, in case
$\alpha=1$, and a result of \cite{Mak82} in all other cases,
we get that
\begin{equation}
F(x, y)=\left\{
\begin{array}{lcl}
c\left[(x+y)^{\alpha}-x^{\alpha}-y^{\alpha}\right], & \hbox{if} & \alpha\notin \left\{0,1\right\} \\
\varphi\left(x+y\right)-\varphi(x)-\varphi(y), & \hbox{if}& \alpha=1
\end{array}
\right.
\end{equation}
where the function $\varphi:\mathbb{R}_{++}\rightarrow\mathbb{R}$ satisfies the functional equation
\[
\varphi\left(xy\right)=x\varphi(y)+y\varphi(x)
\]
for all $x, y\in\mathbb{R}_{++}$, and $c\in\mathbb{R}$ is a constant.
In view of the definition of the function $F$, this yields that
\begin{equation}\label{EQ23}
H(x, y, 0)= c\left[(x+y)^{\alpha}-x^{\alpha}-y^{\alpha}\right]
\end{equation}
for all $x, y\in\mathbb{R}_{++}$ in case $\alpha\notin\left\{0, 1\right\}$, and
\begin{equation}\label{EQ24}
H(x, y, 0)=\varphi\left(x+y\right)-\varphi(x)-\varphi(y)
\end{equation}
for all $x, y\in\mathbb{R}_{++}$ in case $\alpha=1$.
Finally, inequalities (\ref{Eq1}), (\ref{Eq2}) and equation (\ref{EQ23}) imply that
\begin{equation}
\begin{array}{l}
\left|H(x, y, z)-c\left[(x+y+z)^{\alpha}-x^{\alpha}-y^{\alpha}-z^{\alpha}\right]\right| \\
\leq
\left|H(x, y, z)-H(x+y, 0, z)-H(x, y, 0)\right| \\
+\left|H(x+y, 0, z)-H(x+y, z, 0)\right|\\
+\left|H(x+y, z, 0)-c\left[(x+y+z)^{\alpha}-(x+y)^{\alpha}-z^{\alpha}\right]\right| \\
+\left|H(x, y, 0)-c\left[(x+y)^{\alpha}-x^{\alpha}-y^{\alpha}\right]\right| \\
\leq \varepsilon_{1}+\varepsilon_{2}
\end{array}
\end{equation}
for all $x, y, z\in\mathbb{R}_{++}$, if $\alpha\notin\left\{0, 1\right\}$, and by reason of
inequalities (\ref{Eq1}), (\ref{Eq2}) and equation (\ref{EQ24}) we obtain that
\begin{equation}
\begin{array}{l}
\left|H(x, y, z)-\left(\varphi\left(x+y+z\right)-\varphi(x)-\varphi(y)-\varphi(z)\right)\right| \\
\leq
\left|H(x, y, z)-H(x+y, 0, z)-H(x, y, 0)\right| \\
+\left|H(x+y, 0, z)-H(x+y, z, 0)\right|\\
+\left|H(x+y, z, 0)-\left(\varphi(x+y+z)-\varphi(x+y)-\varphi(z)\right)\right| \\
+\left|H(x, y, 0)-\left(\varphi(x+y)-\varphi(x)-\varphi(y)\right)\right| \\
\leq \varepsilon_{1}+\varepsilon_{2}
\end{array}
\end{equation}
for all $x, y, z\in\mathbb{R}_{++}$, if $\alpha=1$.

In case $\alpha=0$, we get from (\ref{Eq14}), that particularly
\[
\left|F(x, y)-F(2x, 2y)\right|\leq \varepsilon_{3}.  \quad \left(x, y\in\mathbb{R}_{++}\right)
\]
Thus inequality (\ref{EQ20}) implies
\[
\begin{array}{l}
\left|F(x, y)-G\left(2x, 2y\right)\right|\\
\leq\left|F\left(x, y\right)-F\left(2x, 2y\right)\right|
+\left|F\left(2x, 2y\right)-G\left(2x, 2y\right)\right|\\
\leq \varepsilon_{3}+2\varepsilon_{2}+4\varepsilon_{1}
\end{array}
\]
holds for all $x, y\in\mathbb{R}_{++}$.
On the other hand
\[
\left|G\left(2x, 2y\right)-\left[2G\left(x, y\right)-F\left(1, 1\right)\right]\right|
\leq 3\left(\varepsilon_{3}+2\varepsilon_{2}+4\varepsilon_{1}\right)
\]
for all $x, y\in\mathbb{R}_{++}$, since,
\[
\begin{array}{l}
\left|F\left(1, 1\right)-G\left(x, x\right)\right|\\
\leq
\left|F\left(1, 1\right)-F\left(x, x\right)\right|+
\left|F\left(x, x\right)-G\left(x, x\right)\right|\\
 \leq
\varepsilon_{3}+2\varepsilon_{2}+4\varepsilon_{1}
\end{array}
\]
where we used inequalities (\ref{Eq14}) and (\ref{Eq15}).
All in all, this means that
\begin{equation}\label{EQ27}
\begin{array}{l}
\left|F\left(x, y\right)-F\left(1, 1\right)\right| \\
\leq \left|G\left(2x, 2y\right)-F\left(x, y\right)\right| \\
+\left|G\left(2x, 2y\right)-2G\left(x, y\right)+F\left(1, 1\right)\right| \\
+2\left|G\left(x,y\right)-F\left(x, y\right)\right| \\
\leq \varepsilon_{3}+2\varepsilon_{2}+4\varepsilon_{1}
+3\left(\varepsilon_{3}+2\varepsilon_{2}+4\varepsilon_{1}\right)
+2\left(2\varepsilon_{2}+4\varepsilon_{1}\right)\\=
4\varepsilon_{3}+12\varepsilon_{2}+24\varepsilon_{1}
\end{array}
\end{equation}
is fulfilled for all $x, y\in\mathbb{R}_{++}$.

Due to the definition of the function $F$,
\begin{equation}\label{EQ28}
\left|H(x, y, 0)-F(1, 1)\right|\leq
4\varepsilon_{3}+12\varepsilon_{2}+24\varepsilon_{1}
\end{equation}
for all $x, y\in\mathbb{R}_{++}$, where we used inequality
\eqref{EQ27}.
Finally, in view of (\ref{Eq1}), (\ref{Eq2})
and (\ref{EQ28}) we obtain that
\begin{equation}
\begin{array}{l}
\left|H(x, y, z)-2F(1, 1)\right|\\ \leq
\left|H(x, y, z)-H(x+y, 0, z)-H(x, y, 0)\right|\\
+ \left|H(x+y, 0, z)-H(x+y, z, 0)\right| \\
+\left|H(x+y, z, 0)-F(1, 1)\right|+\left|H(x, y, 0)-F(1, 1)\right| \\
\leq \varepsilon_{2}+\varepsilon_{1}+
(4\varepsilon_{3}+12\varepsilon_{2}+24\varepsilon_{1})
+(4\varepsilon_{3}+12\varepsilon_{2}+24\varepsilon_{1})\\
=8\varepsilon_{3}+25\varepsilon_{2}+49\varepsilon_{1}
\end{array}
\end{equation}
holds for all $(x, y, z)\in D^{\circ}$. Let $a=2F(1, 1)$ to get the desired inequality.
\end{proof}

With the choice $\varepsilon_{1}=\varepsilon_{2}=\varepsilon_{3}=0$ one can
recognize the solutions
of equation (\ref{ent.eq}).

\begin{Cor}
Assume that the function
$H:D\rightarrow\mathbb{R}$ is symmetric, homogeneous of degree $\alpha$, where
$\alpha\in\mathbb{R}$ is arbitrary but fixed. Furthermore, suppose that
$H$ satisfies equation (\ref{ent.eq}) on the set $D^{\circ}$.
Then, in case $\alpha=1$ there exists a function $\varphi:\mathbb{R}_{++}\rightarrow\mathbb{R}$ which satisfies
the functional equation
\[
\varphi\left(xy\right)=x\varphi\left(y\right)+y\varphi\left(x\right),
\quad \left(x, y\in\mathbb{R}_{++}\right)
\]
and
\begin{equation}
H\left(x, y, z\right)=
\varphi\left(x+y+z\right)-\varphi\left(x\right)-\varphi\left(y\right)-\varphi\left(z\right)
\end{equation}
holds for all $\left(x, y, z\right)\in D^{\circ}$;
in all other cases there exists a constant $c\in\mathbb{R}$ such that
\begin{equation}
H\left(x, y, z\right)=c\left[\left(x+y+z\right)^{\alpha}-x^{\alpha}-y^{\alpha}-z^{\alpha}\right]
\end{equation}
holds on $D^{\circ}$.
\end{Cor}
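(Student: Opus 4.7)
The plan is to derive the corollary as the specialisation of Theorem \ref{main} to $\varepsilon_{1} = \varepsilon_{2} = \varepsilon_{3} = 0$. First I would observe that under these choices the three defining inequalities (\ref{Eq1}), (\ref{Eq2}), (\ref{Eq3}) collapse precisely into the three standing hypotheses of the corollary: the symmetry of $H$ under all permutations of its arguments, the entropy equation (\ref{ent.eq}) on $D^{\circ}$, and the $\alpha$-homogeneity of the trace $(x,y) \mapsto H(x,y,0)$. Hence the hypotheses of Theorem \ref{main} are automatically satisfied.

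Next I would read off the conclusions of the main theorem in this zero-error regime. Each of the estimates (\ref{Eq4}), (\ref{Eq5}), (\ref{Eq6}) has a right-hand side that is a nonnegative linear combination of the $\varepsilon_i$, so all three become equalities. In the case $\alpha = 1$, inequality (\ref{Eq4}) yields
\[
H(x,y,z) = \varphi(x+y+z) - \varphi(x) - \varphi(y) - \varphi(z)
\]
with $\varphi$ a solution of $\varphi(xy) = x\varphi(y) + y\varphi(x)$, which is the first assertion. For $\alpha \notin \{0,1\}$, inequality (\ref{Eq6}) gives directly
\[
H(x,y,z) = c\left[(x+y+z)^{\alpha} - x^{\alpha} - y^{\alpha} - z^{\alpha}\right],
\]
which is the second assertion in this range of $\alpha$.

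The only step that requires a brief remark is the case $\alpha = 0$, since Theorem \ref{main} records it separately and concludes only that $H \equiv a$ for some constant $a \in \mathbb{R}$. I would fold this into the ``all other cases'' branch by noting that $(x+y+z)^{0} - x^{0} - y^{0} - z^{0} = 1 - 3 = -2$, so setting $c = -a/2$ identifies the constant solution with $c[(x+y+z)^{\alpha} - x^{\alpha} - y^{\alpha} - z^{\alpha}]$ and thereby absorbs the $\alpha = 0$ case into the claimed uniform formula.

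I do not expect any genuine obstacle: the entire content of the corollary is already present in Theorem \ref{main}, and the work reduces to verifying the hypotheses and performing the cosmetic reconciliation between the constant solution at $\alpha = 0$ and the closed-form expression valid for all $\alpha \neq 1$.
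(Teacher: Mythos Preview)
Your proposal is correct and matches the paper's own approach exactly: the paper introduces the corollary with the sentence ``With the choice $\varepsilon_{1}=\varepsilon_{2}=\varepsilon_{3}=0$ one can recognize the solutions of equation (\ref{ent.eq}),'' and offers no further argument. Your explicit reconciliation of the $\alpha=0$ case via $c=-a/2$ is a detail the paper leaves to the reader.
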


\begin{remark}
Our theorem says that the entropy equation is stable in the sense of Hyers and Ulam.
\end{remark}

\begin{remark}
In 2005 an article of J.~Tabor and J.~Tabor has appeared with
exactly the same title as that of the present paper.
However, in \cite{TT05} the Hyers--Ulam stability of the functional equation
\[
L\left(\sum^{3}_{j=1}k_{j}f\left(p_{j}\right)\right)=
\sum^{3}_{j=1}k_{j}g\left(p_{j}\right)
\]
is proved, for the Banach space $X$,
$0\leq p_{j}\leq 1$, $k_{j}\in\mathbb{N}\cup \left\{0\right\}$,
$\sum^{3}_{j=1}k_{j}p_{j}=1$, where $f:[0,1]\rightarrow\mathbb{R}_{+}$,
$g:[0,1]\rightarrow X$ and
$L:\mathbb{R}_{+}\rightarrow X$ unknown continuous functions satisfying
some additional conditions.
\end{remark}

\textbf{Acknowledgement.}
The author is grateful to Professor Gyula Maksa for his helpful comments
and permanent encouragement during the preparation of the manuscript.

\end{document}